\newtheorem{thm}{Theorem}[section]
\newtheorem{lemma}[thm]{Lemma}
\newtheorem{proposition}[thm]{Proposition}
\newtheorem{definition}[thm]{Definition}
\newtheorem{corollary}[thm]{Corollary}
\newtheorem{question}[thm]{Question}
\newcommand{\dom}{\mathrm{dom}}
\newcommand{\h}{\mathrm{ht}}
\newcommand{\res}{\upharpoonright}
\newcommand{\ka}{\kappa}
\newcommand{\pro}{\prod_{\omega,\kappa} R}
\begin{document}

\title{A Rigid Kurepa Tree From a Free Suslin Tree}

\author{John Krueger}

\address{John Krueger \\ Department of Mathematics \\ 
	University of North Texas \\
	1155 Union Circle \#311430 \\
	Denton, TX 76203}
\email{jkrueger@unt.edu}

\date{March 2023; Revised January 2025.}

\thanks{2020 \emph{Mathematics Subject Classification}: 
Primary 03E05; Secondary 03E40.}

\thanks{\emph{Key words and phrases}: free Suslin tree, Kurepa tree, rigid tree}

\begin{abstract}
We analyze a countable support product of a free Suslin tree which turns it into a 
highly rigid Kurepa tree with no Aronszajn subtree. 
In the process, we introduce a new rigidity property for trees, which says roughly speaking 
that any non-trivial strictly increasing function from a section of the tree into itself maps 
into a cofinal branch.
\end{abstract}

\maketitle

\section{Introduction}

In this article we introduce a method of turning a free Suslin tree 
into a Kurepa tree by a countable support product forcing. 
The standard way to add a cofinal branch to a Suslin tree is by forcing with the tree itself 
(with the order reversed). 
There are a variety of possible outcomes after forcing with a Suslin tree, ranging from 
adding exactly one cofinal branch, as is the case with a free Suslin tree, 
to adding more than $\omega_1$ many cofinal branches. 
In particular, a Suslin tree which becomes a Kurepa tree after forcing with it 
is sometimes called an almost Kurepa Suslin tree 
in the literature, and some recent examples can be found in 
\cite{fuchs} and \cite{ramandi}.

A more direct albeit naive way to try to turn a Suslin tree into a Kurepa tree 
would be to force with the tree repeatedly enough times to produce the desired 
number of branches. 
However, in some cases even adding 
two cofinal branches to a Suslin tree will collapse $\omega_1$. 
This happens, for example, when the tree is self-specializing, which means that adding a 
cofinal branch specializes the rest of the tree outside of the branch. 
On the other hand, if a Suslin tree $S$ is $2$-free, then after forcing with it 
there exists a dense set of $x \in S$ for which $S_x$ (the part of the tree above $x$) 
is still Suslin. 
More broadly, for any $n < \omega$, a normal Suslin tree $S$ is $(n+1)$-free iff after 
forcing with $S$ $n$-many times, there are densely many $x$ in $S$ such that 
$S_x$ is Suslin.

These facts suggest considering free Suslin trees as candidates for a type of Suslin tree 
which could turn into a Kurepa tree after repeatedly forcing with it via 
a product forcing. 
As a first attempt, one could consider a finite support product of a free Suslin tree, 
with the idea that the product might be c.c.c. 
However, by a result of Jensen and Schlechta \cite{jensen}, 
after L{\'e}vy collaping 
a Mahlo cardinal to become $\omega_2$, there does not exist any c.c.c.\ forcing 
which adds a Kurepa tree, and free Suslin trees can exist in such a model.

It turns out that a countable support product of a free Suslin tree 
is a reasonable and effective forcing for turning a Suslin tree into a Kurepa tree. 
Such a product forcing is proper, 
countably distributive, and $(2^\omega)^+$-c.c. 
In particular, assuming \textsf{CH}, it preserves all cardinals (and in fact, 
in the absence of \textsf{CH} it collapses $2^\omega$ to become $\omega_1$). 
In addition, the Kurepa tree thus obtained has nice properties. 
For example, it has no Aronszajn subtree, and the cofinal branches of the tree 
are exactly the generics for the factors of the product forcing.

But the standout feature of this Kurepa tree is a very strong rigidity property 
which in particular 
implies the non-existence of injective strictly increasing and level preserving 
maps from the tree into itself other than 
the identity function. 
More specifically, after forcing with a countable support product of a free Suslin tree $R$, 
any strictly increasing map from a dense subset of $R_x$ into $R_y$, 
where $x$ and $y$ are incomparable, collapses into a branch above densely many 
elements of its domain.

\section{Preliminaries}

All of the trees in this article are assumed to have height $\omega_1$. 
The reader should be familiar with Aronszajn and Suslin trees, 
as well as the basics of forcing. 
We describe some of the notation and terminology which we will use, 
and prove two known lemmas which we will need later.

An \emph{$\omega_1$-tree} is a tree of height $\omega_1$ whose levels are countable. 
A \emph{Kurepa tree} is an $\omega_1$-tree with at least $\omega_2$ 
many cofinal branches. 
If $b$ is a cofinal branch of a tree $T$, we will write $b(\alpha)$ for the 
unique element of $b$ of height $\alpha$. 
Let $\h_T(x)$ denote the height of $x$ in the tree $T$, 
and for any $\gamma \le \h_T(x)$, 
$x \res \gamma$ is the unique element of $T$ below $x$ of height $\gamma$. 
Let $T_\alpha$ denote level $\alpha$ of $T$, which is 
the set of elements of $T$ with height $\alpha$, and let 
$T \res \beta := \bigcup_{\alpha < \beta} T_\alpha$.

A tree is \emph{normal} if it has a root, every element has at least two 
immediate successors, any distinct elements of the same limit height have 
different sets of elements below them, and every element has elements above 
it at any higher level. 
A \emph{subtree} of a tree $T$ is any subset of $T$ considered as a tree 
with the order induced by $<_T$ 
(we note that this differs from the terminology of 
some authors who require a subtree to be downwards closed). 
The \emph{downward closure} of a set $U \subseteq T$ is the set of $x$ such that for some 
$y \in U$, $x \le_T y$. 
For any $x \in T$, $T_x$ is the subtree $\{ y \in T : x \le_T y \}$.

For trees $T_0, \ldots, T_{n-1}$, 
$T_0 \otimes \cdots \otimes T_{n-1}$ is the tree consisting of all 
$n$-tuples $(a_0,\ldots,a_{n-1}) \in T_0 \times \cdots \times T_{n-1}$ 
such that $a_0,\ldots,a_{n-1}$ have the same heights in their respective trees, 
ordered componentwise. 
An \emph{$n$-derived tree} of a tree $T$ is a tree of the form 
$T_{x_0} \otimes \cdots \otimes T_{x_{n-1}}$, where $x_0, \ldots, x_{n-1}$ 
are distinct elements of $T$ of the same height. 
A tree $T$ is \emph{$n$-free} if all of its $n$-derived trees are Suslin, and 
is \emph{free} if it is $n$-free for all $0 < n < \omega$.

When we consider a Suslin tree as a forcing notion, we implicitly mean the forcing 
with the tree order reversed, and use standard forcing terminology such as ``dense'' or 
``dense open'' with the reverse order. 
If $T$ is a Suslin tree and $D \subseteq T$ is dense open, then there exists some 
$\gamma < \omega_1$ such that $T_\gamma \subseteq D$. 
When we refer to a cardinal $\theta$ being ``large enough'', we mean that it 
is large enough so that $H(\theta)$ contains all parameters under discussion.

\begin{lemma}
	Assume that $S$ is a Suslin tree. 
	Let $\theta$ be a large enough regular cardinal and let 
	$M$ be a countable elementary substructure of $H(\theta)$ with $S \in M$. 
	Suppose that $X \in M$ is a subset of $S$, $x \in S_{M \cap \omega_1}$, 
	and there exists some $z \in X$ with $x \le_S z$. 
	Then there exists some $y <_S x$ with $y \in X$.
\end{lemma}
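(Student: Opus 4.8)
The plan is to use the standard fact that, for a Suslin tree $S$ and a countable $M \prec H(\theta)$ with $S \in M$, the branch of $S$ below any node of height $\delta := M \cap \omega_1$ is $M$-generic. Concretely, if $D \in M$ is dense open in $S$, then by the fact quoted just before the statement there is some $\gamma < \omega_1$ with $S_\gamma \subseteq D$; the least such $\gamma$ is definable from $D$ and $S$, hence lies in $M$, and therefore $\gamma < M \cap \omega_1 = \delta$. Since $x$ has height $\delta$, the node $x \res \gamma$ is well-defined and lies in $S_\gamma \subseteq D$. Thus the branch below $x$ meets every dense open set belonging to $M$, and the whole proof reduces to packaging the hypothesis on $X$ into a suitable dense open set.

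The set I would use is
\[
D := \{ w \in S : \exists z \in X \ (z \le_S w) \} \cup \{ w \in S : \neg \exists z \in X \ (w \le_S z) \},
\]
the union of the upward closure of $X$ with the set of nodes having no extension in $X$. Both $X$ and $S$ lie in $M$, so $D \in M$. It is routine to check that $D$ is dense: given any $p$, if some $z \in X$ lies above $p$ then $z$ itself (witnessed by $z \le_S z$) is an extension of $p$ in the first piece, and otherwise $p$ already belongs to the second piece. Openness (upward closure in the tree order) is equally direct for each of the two pieces, since being above an element of $X$, and having nothing above in $X$, are both preserved by passing to larger nodes.

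Finally I would apply genericity. By the first paragraph there is $\gamma < \delta$ with $x \res \gamma \in D$. The hypothesis provides $z \in X$ with $x \le_S z$; since $x \res \gamma <_S x \le_S z$, the node $x \res \gamma$ has an extension in $X$ and hence cannot belong to the second piece of $D$. Therefore it belongs to the first piece, so there is $y \in X$ with $y \le_S x \res \gamma <_S x$, which is the desired $y <_S x$ in $X$. The only point requiring thought is the choice of $D$: one must combine the upward closure of $X$ with its ``nothing above'' complement so that $D$ becomes dense open, and then observe that the existence of $z \in X$ above $x$ is exactly what forces $x \res \gamma$ out of the bad piece and into the upward closure.
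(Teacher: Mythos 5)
Your proof is correct and follows essentially the same route as the paper: the same dense open set (your second piece, ``no extension in $X$,'' defines the same union as the paper's ``incomparable with every member of $X$,'' since anything with an element of $X$ below it already lies in the first piece), the same use of elementarity to get a level $\gamma < M \cap \omega_1$ contained in $D$, and the same final step forcing $x \res \gamma$ into the upward closure of $X$.
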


\begin{proof}
	Define $D$ as the set of $a \in S$ such that either there exists some $y \in X$ 
	with $y \le_S a$, 
	or else $a$ is incomparable with every member of $X$. 
	It is easy to check that $D$ is dense open in $S$. 
	So there exists a least $\gamma < \omega_1$ such that $S_{\gamma} \subseteq D$.  
	Note that by elementarity $D \in M$, so $\gamma < M \cap \omega_1$, 
	and $x \res \gamma$ is in $D$. 
	Since $x \res \gamma <_S x \le_S z \in X$, by the definition of $D$ 
	there exists some $y \in X$ such that $y \le_S x \res \gamma$, and 
	so $y <_S x$.
\end{proof}

\begin{lemma}
	Let $T$ be an $\omega_1$-tree. 
	If $T$ has an Aronszajn subtree, then $T$ has a downwards closed 
	Aronszajn subtree.
\end{lemma}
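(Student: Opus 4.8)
The plan is to take the downward closure and prove it is Aronszajn. Let $A$ be an Aronszajn subtree of $T$, and let $B$ be the downward closure of $A$ in $T$. Then $B$ is downward closed by construction. Since $B \subseteq T$ is downward closed, the height of any $x \in B$ computed in $B$ agrees with $\h_T(x)$, so the levels of $B$ are the sets $T_\alpha \cap B$, which are countable because $T$ is an $\omega_1$-tree. For $x \in A$ the height of $x$ in $A$ is at most $\h_T(x)$, so as $A$ has height $\omega_1$ the $T$-heights of elements of $A$ are cofinal in $\omega_1$; since $x \res \alpha \in B$ whenever $\h_T(x) \ge \alpha$, it follows that $B$ meets every level below $\omega_1$ and has height $\omega_1$. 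Thus $B$ is a downward closed $\omega_1$-tree, and everything reduces to showing that $B$ has no cofinal branch.

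So I would suppose toward a contradiction that $b$ is a cofinal branch of $B$. First observe that the chain $A \cap b$ is countable: otherwise it would contain a chain of order type $\omega_1$ inside $A$, giving a cofinal branch of $A$ and contradicting that $A$ is Aronszajn. Let $\beta^* < \omega_1$ bound the $T$-heights of the members of $A \cap b$. For each $\delta$ the element $b(\delta)$ lies in $B$, so by definition of the downward closure there is some $x_\delta \in A$ with $b(\delta) \le_T x_\delta$; fix such an $x_\delta$. For $\delta > \beta^*$ we have $\h_T(x_\delta) \ge \delta > \beta^*$, so $x_\delta \notin A \cap b$ and hence $x_\delta \notin b$.

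The heart of the argument is a pigeonhole step exploiting the countability of the levels of $A$. For each $\delta > \beta^*$ let $w_\delta$ be the $<_A$-least element $y \le_A x_\delta$ with $y \notin b$; this exists because $x_\delta$ itself is such a $y$. Since $x_\delta$ extends $b(\delta)$, every $T$-predecessor of $x_\delta$ of height $\gamma \le \delta$ equals $b(\gamma) \in b$; as $w_\delta \le_T x_\delta$ and $w_\delta \notin b$, this forces $\h_T(w_\delta) > \delta$. On the other hand, by minimality every $<_A$-predecessor of $w_\delta$ lies in $b$, hence in the countable chain $A \cap b$, so the set of $A$-predecessors of $w_\delta$ is an initial segment of the countable linear order $A \cap b$. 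There are only countably many such initial segments, and two elements of $A$ with the same set of predecessors occupy the same (countable) level of $A$; it follows that $\{ w_\delta : \delta > \beta^* \}$ is countable. Since $\delta \mapsto w_\delta$ sends uncountably many $\delta$ into this countable set while $\h_T(w_\delta) > \delta$, some fixed $w$ has $\h_T(w) > \delta$ for unboundedly many $\delta$, forcing $\h_T(w) = \omega_1$, a contradiction. Hence $B$ has no cofinal branch and is the desired downward closed Aronszajn subtree.

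The step I expect to be the main obstacle is precisely the collection of the ``first off-branch nodes'' $w_\delta$ into a countable set. Because $A$ is not itself downward closed in $T$, the $A$-height and $T$-height of a node differ, and the $<_A$-predecessors of a node extending $b(\delta)$ may sit $T$-below the branch, so one cannot read off the needed comparabilities directly from $T$-heights. It is the countability of $\{w_\delta : \delta > \beta^*\}$ that makes the height pigeonhole bite, and establishing it rests on combining the only two instances of countability at hand: that $A \cap b$ is countable and that every level of $A$ is countable.
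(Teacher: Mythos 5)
Your proof is correct, and while it follows the same overall strategy as the paper's --- pass to the downward closure, observe that downward closedness makes the levels countable, then refute a cofinal branch $b$ of the closure by playing the countable chain $A \cap b$ against heights --- the core of the contradiction is executed by a genuinely different mechanism. The paper (writing $U$ for the Aronszajn subtree and $W$ for its closure) fixes $\alpha$ with $b \cap U \subseteq T \res \alpha$, uses countability of the levels of $U$ to fix $\beta > \alpha$ bounding the $T$-heights of all elements of $U$ of $U$-height at most $\alpha$, picks a \emph{single} $x \in b$ of $T$-height above $\beta$, takes $y \in U$ above $x$ of least $T$-height, and shows that the $\alpha$-th $U$-predecessor $z^*$ of $y$ must lie below $x$ (by that height minimality), hence in $b \cap U$ --- a one-shot contradiction with the bound $\alpha$. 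You instead manufacture uncountably many witnesses: for each $\delta$ the $<_A$-least off-branch node $w_\delta$ below a point of $A$ above $b(\delta)$, each of $T$-height greater than $\delta$ but with all of its $A$-predecessors inside $A \cap b$; you then close with a pigeonhole, since the countably many possible predecessor sets and the countable levels of $A$ confine $\{ w_\delta : \delta > \beta^* \}$ to a countable set, so a single node would need unboundedly large height. So the paper spends the countable-levels hypothesis up front (to choose $\beta$) and gets a direct contradiction, while you spend it at the end to make the pigeonhole bite; the two minimality devices are dual (least $T$-height above the branch versus $<_A$-least off the branch), and the arguments are of comparable length. One point you should make explicit: a countable linear order can have uncountably many initial segments (e.g.\ the rationals have continuum many Dedekind cuts), so your count of initial segments of $A \cap b$ requires that this chain is well-ordered --- which is true, since chains in trees are well-ordered, the same fact that licenses your ``$<_A$-least'' choices.
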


\begin{proof}
	Assume that $U$ is an uncountable subtree of $T$ with countable levels, 
	and let $W$ be the downward closure of $U$. 
	We claim that if $U$ is Aronszajn, then so is $W$, or equivalently, 
	if $W$ has a cofinal branch, then so does $U$. 
	Suppose that $b$ is a cofinal branch of $W$. 
	We will prove that $b \cap U$ is uncountable, which implies that 
	$U$ contains a cofinal branch.

	Suppose for a contradiction that $b \cap U$ is countable. 
	Then there exists some $\alpha < \omega_1$ 
	such that $b \cap U \subseteq T \res \alpha$. 
	As $U$ has countable levels, 
	we can find some $\beta < \omega_1$ greater than $\alpha$ such that 
	$U \res (\alpha+1) \subseteq T \res \beta$.

	Since $b$ is uncountable, fix some $x \in b$ with $\h_T(x) > \beta$. 
	Then $x \in W$, so let $y$ be of least height such that 
	$y \in U$ and $x \le_T y$. 
	Then the height of $y$ in $T$ is greater than $\beta$, 
	which implies that $y$ is not in $U \res (\alpha+1)$.

	Let $z^*$ be the $\alpha$-th member (according to the tree ordering) 
	of the set $\{ z \in U : z <_U y \}$. 
	Then obviously the height of $z^*$ in $T$ is at least $\alpha$. 
	As $b \cap U \subseteq T \res \alpha$ and $z^* \in U$, 
	it follows that $z^* \notin b$. 
	Now $x \le_T y$ and $z^* <_T y$, so $x$ and $z^*$ are comparable in $T$. 
	By the minimal height of $y$, $z^* <_T x$. 
	But $x \in b$, so $z^* \in b$ and we have a contradiction. 
\end{proof}

\section{Turning a Free Tree Into a Kurepa Tree}

For the remainder of the article we assume that 
$R$ is a normal free Suslin tree and $\ka$ is a non-zero ordinal number. 
The existence of such a tree follows from $\Diamond$ (see \cite[Chapter V]{dj}). 
We will define a forcing poset which adds no new countable sets of ordinals, 
is $(2^\omega)^+$-c.c., 
forces \textsf{CH}, and adds distinct cofinal branches $b_i$ to $R$ 
for each $i < \ka$ (and those are all of the cofinal branches). 
In particular, when $\ka \ge \omega_2$, then 
$R$ is forced to become a Kurepa tree. 
We will also show that in the forcing extension, $R$ has no Aronszajn subtree 
and satisfies a strong rigidity property. 
In fact, the forcing is just a countable support product of $\ka$-many 
copies of $R$.

\begin{definition}
	Let $\pro$ be the forcing poset consisting of all functions 
	$p : \dom(p) \to R$ 
	such that $\dom(p)$ is a countable subset of $\ka$ and for each 
	$i \in \dom(p)$, $p(i) \in R$. 
	Define $q \le p$ if $\dom(p) \subseteq \dom(q)$ and for all 
	$i \in \dom(p)$, $p(i) \le_R q(i)$.
\end{definition}

The following lemmas are straightforward to check.

\begin{lemma}
	For any $i < \kappa$ and $\gamma < \omega_1$, 
	the set of $p \in \pro$ 
	such that $i \in \dom(p)$ and the height of $p(i)$ in $R$ 
	is greater than $\gamma$ is dense open in $\pro$.
\end{lemma}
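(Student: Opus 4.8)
The plan is to show density and openness separately, both by direct verification using the definition of the product forcing.

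First I would check openness, which is immediate: suppose $p$ lies in the set, so $i \in \dom(p)$ and $\h_R(p(i)) > \gamma$, and suppose $q \le p$. By definition of the order, $\dom(p) \subseteq \dom(q)$, so $i \in \dom(q)$, and $p(i) \le_R q(i)$. Since $R$ is a tree and $q(i)$ lies above $p(i)$, its height satisfies $\h_R(q(i)) \ge \h_R(p(i)) > \gamma$. Hence $q$ is also in the set, establishing openness.

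Next I would verify density. Given an arbitrary condition $p \in \pro$, I need to find $q \le p$ in the target set. Here I split into two cases according to whether $i \in \dom(p)$. The key fact I rely on is that $R$ is a normal tree of height $\omega_1$, so every node has extensions at every higher level; in particular, for any $r \in R$ and any $\delta < \omega_1$ there exists $r' \ge_R r$ with $\h_R(r') > \delta$, and of course $R$ itself has elements of height greater than $\gamma$. If $i \in \dom(p)$, I pick some $r' \ge_R p(i)$ with $\h_R(r') > \gamma$ and define $q$ by setting $q(j) = p(j)$ for $j \in \dom(p) \setminus \{i\}$ and $q(i) = r'$; then $\dom(q) = \dom(p)$ is countable, $q \le p$, and $q$ is in the set. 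If $i \notin \dom(p)$, I instead pick any $r' \in R$ with $\h_R(r') > \gamma$ and define $q$ with $\dom(q) = \dom(p) \cup \{i\}$, agreeing with $p$ on $\dom(p)$ and with $q(i) = r'$; then $\dom(q)$ is still countable, $q \le p$ since $p$ is unchanged on its domain, and $q$ meets the required condition.

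There is no serious obstacle here: the statement is exactly the kind of routine density-and-openness check that makes the product forcing well-behaved, and the only input beyond the definitions is normality of $R$, which guarantees the needed extensions exist. The only point demanding a moment's care is the bookkeeping in the density argument when $i \notin \dom(p)$, namely confirming that enlarging the domain by a single coordinate keeps it countable and does not disturb the values on $\dom(p)$, so that $q \le p$ still holds.
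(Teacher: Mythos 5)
Your proof is correct: the paper states this lemma without proof (it is one of the lemmas described as ``straightforward to check''), and your openness argument plus the two-case density argument using normality of $R$ is exactly the routine verification intended. The only minor remark is that for the case $i \notin \dom(p)$ you do not even need normality, just that $R$ has height $\omega_1$, so some element has height greater than $\gamma$.
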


\begin{lemma}
	For all distinct $i$ and $j$ in $\ka$, 
	the set of $p \in \pro$ such that $i$ and $j$ 
	are in $\dom(p)$ and $p(i)$ and $p(j)$ are incomparable in $R$ is dense open.
\end{lemma}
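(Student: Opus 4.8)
The plan is to verify the two defining properties of a dense open set—upward closure (openness) and density—separately, with the real content lying in the density argument, which relies only on the normality of $R$; neither Suslinity nor freeness is needed here.

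For openness, suppose $p$ belongs to the set in question and $q \le p$. Then $i, j \in \dom(p) \subseteq \dom(q)$, and by definition of the order $p(i) \le_R q(i)$ and $p(j) \le_R q(j)$. I claim that $q(i)$ and $q(j)$ remain incomparable. Indeed, if they were comparable—say $q(i) \le_R q(j)$, the other case being symmetric—then both $p(i) \le_R q(i) \le_R q(j)$ and $p(j) \le_R q(j)$ would hold, so $p(i)$ and $p(j)$ would lie on the branch below $q(j)$ and hence be comparable, contradicting the choice of $p$. Thus incomparability is preserved upward and the set is open.

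For density, fix an arbitrary $p \in \pro$. First, by passing to an extension I may assume $i, j \in \dom(p)$: if either index is missing, I add it to the domain and assign it the root of $R$, obtaining a condition below $p$. If $p(i)$ and $p(j)$ are already incomparable there is nothing to do. Otherwise they are comparable, and I extend only the values at $i$ and $j$, leaving $p$ unchanged off $\{i,j\}$, so the result is again a condition below $p$. If $p(i) = p(j)$, normality provides two distinct immediate successors $s_0, s_1$ of this common node; these are incomparable, so setting the new values to $s_0$ and $s_1$ succeeds. If instead $p(i) <_R p(j)$, let $t \le_R p(j)$ be the unique immediate successor of $p(i)$ lying below $p(j)$, and use normality to pick another immediate successor $s \ne t$ of $p(i)$; assigning the value $s$ at $i$ and keeping the value $p(j)$ at $j$ then works.

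There is no essential obstacle here—this is precisely the sort of elementary bookkeeping the text flags as ``straightforward''—but the one place to be careful is confirming in the density step that each modification genuinely produces a condition stronger than $p$ (the domain only grows and the values only move upward in $R$) and that the new pair of values is genuinely incomparable rather than merely distinct. The incomparability verification in the case $p(i) <_R p(j)$ is the sole spot requiring a line of reasoning beyond pure definition-chasing: one checks that the second immediate successor $s$ of $p(i)$ can be neither below $p(j)$, since it would then coincide with $t$, nor above $p(j)$, since its height in $R$ is exactly one more than that of $p(i)$.
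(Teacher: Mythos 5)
Your proof is correct. The paper offers no argument for this lemma at all---it is one of the lemmas dismissed as ``straightforward to check''---and your write-up supplies exactly the routine verification that is being taken for granted: upward closure via the linearity of predecessors in a tree, and density via the root and the two-immediate-successors clause of normality (the only unstated point is the case $p(j) <_R p(i)$, which is symmetric to the case you treat).
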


\begin{lemma}
	If $\kappa \ge \omega_1$, then for all $x \in R$, 
	the set of $p \in \pro$ such that for some $i \in \dom(p)$, 
	$x <_R p(i)$, is dense open.
\end{lemma}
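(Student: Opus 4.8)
The plan is to verify openness and density separately, the key point being that since $\ka \ge \omega_1$ is uncountable while $\dom(p)$ is always countable, every condition has a coordinate outside its domain into which we can insert a node lying above $x$.

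For openness I would fix some $p$ in the set, witnessed by an $i \in \dom(p)$ with $x <_R p(i)$, and take an arbitrary $q \le p$. By the definition of the order, $i \in \dom(q)$ and $p(i) \le_R q(i)$, so $x <_R p(i) \le_R q(i)$ yields $x <_R q(i)$. Hence $q$ belongs to the set with the same witness $i$, and the set is open.

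For density I would take an arbitrary $p \in \pro$. Since $\dom(p)$ is countable and $\ka \ge \omega_1$, there is some $i \in \ka \setminus \dom(p)$. By normality of $R$ (every element has at least two immediate successors, and hence a strict successor) fix some $x' \in R$ with $x <_R x'$, and set $q := p \cup \{ (i, x') \}$. Then $\dom(p) \subseteq \dom(q)$ and $q$ agrees with $p$ on $\dom(p)$, so $q \le p$; and since $i \in \dom(q)$ with $x <_R q(i) = x'$, the extension $q$ lies in the set. Note this construction works regardless of whether $p$ was already in the set, so no case split is needed.

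There is no substantial obstacle here: the only places the hypotheses are used are the appeal to $\ka \ge \omega_1$ to produce a coordinate outside the countable set $\dom(p)$, and the appeal to normality of $R$ to produce a node strictly above $x$. Both are immediate from the standing assumptions on $R$ and $\ka$.
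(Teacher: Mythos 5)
Your proof is correct, and it is exactly the straightforward argument the paper has in mind when it states this lemma without proof ("The following lemmas are straightforward to check"): openness follows directly from the definition of the extension order, and density uses $\ka \ge \omega_1$ to find a fresh coordinate together with normality of $R$ to find a node strictly above $x$. Nothing is missing.
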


Suppose that $G$ is a generic filter on $\pro$. 
In $V[G]$, define for each $i < \kappa$ 
$$
b_i := \{ y \in R : \exists p \in G, \ i \in \dom(p) \ \land \ y \le_R p(i) \}.
$$
A straightforward argument using the previous lemmas shows that in $V[G]$, 
each $b_i$ is a cofinal branch of $R$, 
and for any distinct $i$ and $j$ in $\ka$, $b_i \ne b_j$. 
Consequently, in $V[G]$ the tree $R$ has at least $|\kappa|$ many cofinal branches. 
Moreover, if $\ka \ge \omega_1$, then $\bigcup_{i < \ka} b_i = R$. 
Let $\dot b_i$ be a name for the branch $b_i$ described above, 
and conversely, whenever $G$ is a generic filter on $\pro$, 
$b_i$ will denote $\dot b_i^G$.

\begin{proposition}
	The forcing poset $\pro$ is $(2^\omega)^+$-Knaster.
\end{proposition}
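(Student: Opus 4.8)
The plan is to combine a $\Delta$-system (sunflower) argument with a counting of the restrictions to the root. First I would isolate the compatibility criterion I actually need: two conditions $p, q \in \pro$ are compatible whenever $p \res (\dom(p) \cap \dom(q)) = q \res (\dom(p) \cap \dom(q))$, because in that case the function $r$ with $\dom(r) = \dom(p) \cup \dom(q)$ defined by $r \res \dom(p) = p$ and $r \res \dom(q) = q$ is well defined and is a common extension of $p$ and $q$. (More generally $p$ and $q$ are compatible as long as $p(i)$ and $q(i)$ are comparable in $R$ for every $i$ in the common domain, but agreement is all the argument will use.)

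Given a family $\{ p_\xi : \xi < (2^\omega)^+ \}$ of conditions, the domains $\dom(p_\xi)$ are countable subsets of $\ka$, so I would apply the $\Delta$-system lemma to them. The hypotheses hold because $(2^\omega)^+$ is regular and, for every cardinal $\mu < (2^\omega)^+$ (equivalently $\mu \le 2^\omega$), we have $\mu^\omega \le (2^\omega)^\omega = 2^\omega < (2^\omega)^+$. Hence there is a set $I \subseteq (2^\omega)^+$ of size $(2^\omega)^+$ and a countable root $d$ such that $\dom(p_\xi) \cap \dom(p_\eta) = d$ for all distinct $\xi, \eta \in I$.

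Next I would count the possible restrictions to the root. Each $p_\xi \res d$ is a function from the countable set $d$ into $R$, and since $R$ is an $\omega_1$-tree we have $|R| = \omega_1 \le 2^\omega$, so the number of such functions is at most $|R|^\omega \le (2^\omega)^\omega = 2^\omega$. Because $(2^\omega)^+$ is regular and strictly larger than $2^\omega$, the pigeonhole principle yields a single function $s : d \to R$ and a set $J \subseteq I$ of size $(2^\omega)^+$ with $p_\xi \res d = s$ for every $\xi \in J$.

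Finally, $\{ p_\xi : \xi \in J \}$ is pairwise compatible: for distinct $\xi, \eta \in J$ the common domain is exactly $d$, where both conditions equal $s$, so the criterion above furnishes a common extension. Thus every family of $(2^\omega)^+$ conditions contains a pairwise compatible subfamily of the same size, which is precisely the $(2^\omega)^+$-Knaster property. The only genuinely delicate points are verifying the cardinal arithmetic needed to invoke the $\Delta$-system lemma and bounding the number of root-restrictions by $2^\omega$; once those are in place the remainder of the argument is routine.
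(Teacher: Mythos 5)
Your proof is correct and follows essentially the same route as the paper: a $\Delta$-system argument on the countable domains, a counting of root-restrictions bounded by $\omega_1^\omega = 2^\omega$, and the observation that conditions agreeing on the root are compatible via their union. The extra details you supply (the explicit compatibility criterion and the cardinal arithmetic justifying the $\Delta$-system lemma at $(2^\omega)^+$) are exactly the verifications the paper leaves implicit or delegates to Kunen.
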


\begin{proof}
	Let $\{ p_i : i < (2^\omega)^+ \}$ be a family of conditions. 
	Applying the $\Delta$-system lemma to the domains of the conditions, 
	we can find an unbounded set $X \subseteq (2^\omega)^+$ and 
	a countable set $r$ such that for all $i < j$ in $X$, 
	$\dom(p_i) \cap \dom(p_j) = r$ 
	(see \cite[Lemma III.6.15]{kunen}, namely, the case when $\lambda = \omega_1$ and 
	$\ka = (2^\omega)^+$). 
	Now each $p_i \res r$ is a function from a countable set into a set of size $\omega_1$, 
	so there are at most $\omega_1^\omega = 2^\omega$ many possibilities for 
	such a function. 
	So find a set $Y \subseteq X$ of size $(2^\omega)^+$ such that 
	for all $i < j$ in $Y$, $p_i \res r = p_j \res r$. 
	It is easy to see that for all $i < j$ in $Y$, $p_i \cup p_j$ is a condition 
	below both $p_i$ and $p_j$.
\end{proof}

Now we start working toward showing that $\pro$ is proper and countably distributive. 
The following lemma is easy.

\begin{lemma}
	Let $\theta$ be a large enough regular cardinal, $M \prec H(\theta)$ countable, 
	and assume that $\pro \in M$.  
	Let $\delta := M \cap \omega_1$. 
	Suppose that $\langle p_n : n < \omega \rangle$ is a descending sequence of 
	conditions in $M \cap \pro$ and $\langle z_n : n < \omega \rangle$ is 
	a sequence of elements of $R_\delta$ satisfying:
	$$
	\forall i \in \bigcup_n \dom(p_n) \ 
	\exists k_i < \omega \ \exists n_i < \omega \ 
	\forall m \ge n_i \ p_m(i) <_R z_{k_i}.
	$$
	Define $q$ to be the function with domain equal to 
	$\bigcup_n \dom(p_n)$ 
	satisfying that for all $i \in \dom(q)$, 
	$q(i) := z_{k_i}$. 
	Then $q \in \pro$ and $q \le p_n$ for all $n$.
\end{lemma}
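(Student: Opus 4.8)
The plan is to verify two things about the candidate function $q$: first that $q$ is genuinely a condition in $\pro$, and second that $q \le p_n$ for every $n$. The hypothesis tells me that for each index $i$ in the union of the domains, there is a fixed $k_i$ and a threshold $n_i$ such that past $n_i$ all the values $p_m(i)$ lie strictly below the single element $z_{k_i} \in R_\delta$. Since $q(i)$ is defined to be exactly this $z_{k_i}$, the first task is essentially bookkeeping: $\dom(q) = \bigcup_n \dom(p_n)$ is a countable union of countable sets, hence countable, and it is a subset of $\ka$; moreover each value $q(i) = z_{k_i}$ is an element of $R$. Thus $q$ is a function from a countable subset of $\ka$ into $R$, which is precisely the requirement to be a member of $\pro$.

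For the second task, I would fix an arbitrary $n$ and show $q \le p_n$. By the definition of the order, I need $\dom(p_n) \subseteq \dom(q)$ and $p_n(i) \le_R q(i)$ for every $i \in \dom(p_n)$. The domain inclusion is immediate since $\dom(p_n)$ is one of the sets in the union defining $\dom(q)$. For the order on values, fix $i \in \dom(p_n)$. I want $p_n(i) \le_R z_{k_i}$. Here I would use two facts in combination: the sequence $\langle p_m : m < \omega \rangle$ is descending, and the hypothesis gives $p_m(i) <_R z_{k_i}$ for all $m \ge n_i$.

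The key observation is that along a single index $i$, the sequence of values $\langle p_m(i) : i \in \dom(p_m) \rangle$ is increasing in the tree order as $m$ grows: since $p_{m+1} \le p_m$, we have $p_m(i) \le_R p_{m+1}(i)$ whenever $i \in \dom(p_m)$. Consequently the elements $\{ p_m(i) : m \ge n_i \}$ are linearly ordered below $z_{k_i}$, forming a chain in $R$ whose members are all $<_R z_{k_i}$. Now pick any $m \ge \max(n, n_i)$; then $p_n(i) \le_R p_m(i) <_R z_{k_i} = q(i)$, which gives $p_n(i) \le_R q(i)$ as desired. This completes the verification for the chosen $n$, and since $n$ was arbitrary, $q \le p_n$ for all $n$.

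I do not anticipate a genuine obstacle here, as the statement explicitly says the lemma is easy; the only point requiring slight care is extracting $p_n(i) \le_R z_{k_i}$ from the hypothesis, which is stated only for large $m$, by interpolating through some $p_m$ with $m$ large enough and using that the factor-wise values climb the tree as one descends in the forcing order. The heights align correctly because $z_{k_i} \in R_\delta$ sits at level $\delta = M \cap \omega_1$, above all the heights of the $p_m(i)$ which live inside $M$.
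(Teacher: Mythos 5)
Your proof is correct and is exactly the routine verification the paper intends: the paper states this lemma without proof (``The following lemma is easy''), and your argument---checking $\dom(q)$ is a countable subset of $\ka$, then interpolating $p_n(i) \le_R p_m(i) <_R z_{k_i}$ for $m \ge \max(n, n_i)$ using that values increase in $<_R$ along the descending sequence---is the standard way to fill it in. The closing remark about heights is unnecessary but harmless, since the hypothesis already gives $p_m(i) <_R z_{k_i}$ directly.
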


\begin{lemma}
	Let $\theta$ be a large enough regular cardinal, $M \prec H(\theta)$ countable, 
	and assume that $\pro \in M$. 
	Let $\delta := M \cap \omega_1$. 
	Suppose that $D \in M$ is a dense open subset of $\pro$, 
	$p \in M \cap \pro$, $i_0,\ldots,i_{n-1} \in \dom(p)$, and 
	$z_0,\ldots,z_{n-1}$ are distinct elements of $R_\delta$ such that 
	$p(i_k) <_R z_k$ for all $k < n$.
	Then there exists some $r \le p$ in $M \cap D$ such that 
	for all $k < n$, $r(i_k) <_R z_k$.
\end{lemma}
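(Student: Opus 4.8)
The plan is to reduce the statement to a reflection argument carried out inside a suitable derived tree, using freeness to guarantee that this derived tree is Suslin so that the first preliminary lemma of Section~2 applies. First I would locate a common base level for the targets. Since $z_0,\ldots,z_{n-1}$ are distinct elements of the limit level $R_\delta$ and $R$ is normal, there is some $\beta^* < \delta$ with $\beta^* \ge \h_R(p(i_k))$ for every $k < n$ and such that the restrictions $x_k := z_k \res \beta^*$ are pairwise distinct. Because $\beta^* < \delta = M \cap \omega_1$ we have $\beta^* \in M$, so $R_{\beta^*}$ is a countable element of $M$ and hence $R_{\beta^*} \subseteq M$; in particular each $x_k \in M$. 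Now set $T := R_{x_0} \otimes \cdots \otimes R_{x_{n-1}}$. As the $x_k$ are distinct of the same height, $T$ is an $n$-derived tree of $R$, so by freeness $T$ is Suslin, and $T \in M$ since it is defined from parameters in $M$.

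Next I would set up the set to be reflected. Write $\bar z := (z_0,\ldots,z_{n-1})$; since each $x_k \le_R z_k$ and the $z_k$ all have height $\delta$, we have $\bar z \in T_\delta = T_{M \cap \omega_1}$. Define
$$
X := \{ (a_0,\ldots,a_{n-1}) \in T : \exists r \le p \ (r \in D \ \land \ \forall k < n \ r(i_k) = a_k) \}.
$$
Then $X \subseteq T$ and $X \in M$, being defined from $T, D, p, i_0,\ldots,i_{n-1}$, all of which lie in $M$. Note that the requirement $\beta^* \ge \h_R(p(i_k))$ forces $p(i_k) \le_R x_k$, so that every node of $T$ already dominates $p$ on the relevant coordinate and is therefore compatible with the demand $r \le p$.

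The crucial point is to verify the hypothesis of the reflection lemma, namely that some element of $X$ lies above $\bar z$ in $T$. For this I would first form the condition $q \le p$ agreeing with $p$ off $\{i_0,\ldots,i_{n-1}\}$ and with $q(i_k) := z_k$; this is a legitimate condition of $\pro$ in $V$ because $p(i_k) <_R z_k$. By density choose $r_0 \le q$ with $r_0 \in D$; then $r_0 \le p$ and $z_k \le_R r_0(i_k)$ for all $k$. Since the $r_0(i_k)$ need not share a height, I would pick $\gamma$ above all of their heights and, using normality, choose $a_k \ge_R r_0(i_k)$ with $\h_R(a_k) = \gamma$, letting $r$ be $r_0$ modified so that $r(i_k) = a_k$. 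As $D$ is dense open and $r \le r_0 \in D$, we get $r \in D$, and $r \le p$; moreover $(a_0,\ldots,a_{n-1}) \in T$ lies weakly above $\bar z$ and is witnessed to be in $X$ by $r$. Thus $X$ meets $T_{\bar z}$.

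Finally, I would apply the first preliminary lemma to the Suslin tree $T$, the set $X \in M$, and the point $\bar z \in T_{M \cap \omega_1}$, obtaining some $\bar y = (y_0,\ldots,y_{n-1}) <_T \bar z$ with $\bar y \in X$; componentwise this reads $y_k <_R z_k$ for all $k$. Since $\bar y$ sits at a level of $T$ below $\delta$, the same countability argument as above yields $\bar y \in M$. Then the statement ``$\exists r \le p \ (r \in D \land \forall k < n \ r(i_k) = y_k)$'' has all its parameters in $M$ and is true, so by elementarity there is such an $r$ in $M$; this $r \in M \cap D$ satisfies $r \le p$ and $r(i_k) = y_k <_R z_k$ for every $k$, as required. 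The main obstacle I anticipate is purely the common-height bookkeeping of the third paragraph: producing an element of $X$ genuinely above $\bar z$, rather than merely some $r \in D$ with the $r(i_k) \ge_R z_k$ at incomparable heights, since the derived tree only records tuples whose coordinates share a level. Everything else is reflection and elementarity, and the degenerate case $n = 0$ is immediate from elementarity and density.
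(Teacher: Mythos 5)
Your proposal is correct and follows essentially the same route as the paper's proof: both pass to the derived tree over a level where the $z_k$ separate (Suslin by freeness), define the set $X$ of tuples realized by conditions in $D$ extending $p$, produce a member of $X$ above $(z_0,\ldots,z_{n-1})$ by density plus height-equalization (using that $D$ is open), and then apply Lemma 2.1 and elementarity to pull a witness into $M$. The only cosmetic difference is that the paper threads the argument through an intermediate condition $q \in M$ with $q(i_k) = z_k \res \gamma$ and requires $r \le q$ in the definition of $X$, whereas you require $r \le p$ directly; both yield the stated conclusion.
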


\begin{proof}
	Since $R$ is normal, we can find some 
	$\gamma < \delta$ so that for all $k < m < n$, 
	$z_{k} \res \gamma \ne z_{m} \res \gamma$, and 
	$\gamma$ is greater than the height of $p(i_k)$ for all $k < n$. 
	Define $q$ with the same domain as $p$ by 
	letting $q(i_k) := z_k \res \gamma$ for all $k < n$, 
	and $q(j) := p(j)$ for any other $j$ in $\dom(p)$. 
	Clearly $q \le p$ and $q \in M$.

	Define $s$ as follows. 
	The domain of $s$ equals the domain of $q$, 
	$s(i_k) := z_k$ for all $k < n$, and $s(j) := q(j)$ 
	for all other $j$ in $\dom(q)$. 
	Now fix $t \le s$ in $D$. 
	By extending further if necessary, we may assume without loss of generality 
	that the elements $t(i_0),\ldots,t(i_k)$ all have the same height.

	Since $R$ is free, the tree 
	$S := R_{z_0 \res \gamma} \otimes \cdots \otimes R_{z_{n-1} \res \gamma}$ is Suslin. 
	Define $X$ to be the set of all $n$-tuples $( y_k : k < n )$ in $S$ 
	for which there exists some $r \in D$ such that 
	$r \le q$ and $r(i_k) = y_k$ for all $k < n$. 
	Note that $X \in M$ by elementarity. 
	Also observe that $t$ is a witness to the fact that 
	$( t(i_k) : k < n )$ is in $X$, and this $n$-tuple is greater than or equal to 
	$( z_k : k < n )$ in $S$.

	By Lemma 2.1 applied to the Suslin tree $S$, 
	there exists some 
	$( y_k : k < n )$ in $X \cap M$ which is below 
	$( z_k : k < n )$ in $S$. 
	Fix a witness $r \in D \cap M$ with $r \le q$ such that 
	$r(i_k) = y_k$ for all $k < n$. 
	Then $r$ is as required.
\end{proof}

\begin{thm}
	The forcing poset $\pro$ is proper and countably distributive.
\end{thm}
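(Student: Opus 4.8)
The plan is to prove both conclusions at once by a single fusion construction, using the two preceding lemmas as the engine. Recall that to establish properness it suffices to show that for a large enough regular $\theta$, every countable $M \prec H(\theta)$ with $\pro \in M$, and every $p \in M \cap \pro$, there is an $(M,\pro)$-generic condition $q \le p$; and that to establish countable distributivity it suffices to show that below any condition, any name for a function from $\omega$ into the ordinals is decided everywhere. Both will fall out of the same object. So I would fix $\theta$, $M$, $p$, and a name $\dot f \in M$ for a function from $\omega$ into the ordinals (for pure properness one can ignore $\dot f$), and set $\delta := M \cap \omega_1$, a countable limit ordinal. Enumerate in order type $\omega$ all dense open subsets of $\pro$ lying in $M$ as $\langle D_n : n < \omega\rangle$, arranging that for each $n$ the dense open set of conditions deciding $\dot f(n)$ appears among them.

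I would then build a descending sequence $p = p_0 \ge p_1 \ge p_2 \ge \cdots$ in $M \cap \pro$ together with an increasing sequence of finite ``committed'' sets $C_0 \subseteq C_1 \subseteq \cdots$ with $C_n \subseteq \dom(p_n)$, and distinct targets $z^i \in R_\delta$ for $i \in \bigcup_n C_n$, maintaining the invariant that $p_m(i) <_R z^i$ for every $i \in C_n$ and every $m \ge n$. At stage $n$ I first commit one new coordinate $c_n \in \dom(p_n)$: since $R$ is normal and $\delta$ is a limit there are infinitely many nodes of $R_\delta$ above $p_n(c_n)$, so I may choose $z^{c_n} \in R_\delta$ above $p_n(c_n)$ and distinct from the finitely many targets used so far, and set $C_{n+1} := C_n \cup \{c_n\}$. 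I then apply the extension lemma to $p_n$, the dense open set $D_n$, the finitely many coordinates of $C_{n+1}$, and their pairwise distinct targets to obtain $p_{n+1} \le p_n$ with $p_{n+1} \in M \cap D_n$ and $p_{n+1}(i) <_R z^i$ for all $i \in C_{n+1}$, preserving the invariant. Once the construction is complete, every coordinate in $E := \bigcup_n \dom(p_n)$ will (by the bookkeeping below) have been committed, so the hypothesis of the lower-bound (fusion) lemma holds with $\langle z^i : i \in E\rangle$ as the list of targets; that lemma then yields a condition $q$ with $\dom(q) = E$, $q(i) = z^i$, and $q \le p_n$ for all $n$. Since $q$ lies below $p_{n+1} \in D_n \cap M$ for every $n$, it is $(M,\pro)$-generic, giving properness; and since for each $n$ the value of $\dot f(n)$ is already decided by the relevant $p_{m+1} \in M$, the condition $q$ decides $\dot f$ everywhere into $M \subseteq V$, giving distributivity.

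The one real obstacle is the bookkeeping that guarantees every element of $E$ is eventually committed, because the extension lemma can enlarge the domain by infinitely many coordinates in a single step, so a naive ``commit the least uncommitted coordinate'' rule can fall permanently behind. I would handle this with a standard dovetailing: fix a surjection $n \mapsto (k_n, j_n)$ of $\omega$ onto $\omega \times \omega$ with $k_n \le n$, fix an enumeration $\dom(p_k) = \{ d^k_0, d^k_1, \dots\}$ as each $p_k$ is defined, and at stage $n$ let the coordinate to be committed be $d^{k_n}_{j_n}$ (skipping the step if it is already committed). Since $k_n \le n$ we have $d^{k_n}_{j_n} \in \dom(p_{k_n}) \subseteq \dom(p_n)$, so it is available; and since every pair $(k,j)$ is hit, every coordinate appearing in any $\dom(p_k)$, hence every element of $E$, is committed at some finite stage. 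The remaining verifications are routine: checking that the extension lemma's hypotheses (membership in the current domain and being below the targets) hold at each stage from the invariant, and that lying below every $p_{n+1} \in D_n \cap M$ indeed makes $q$ both $(M,\pro)$-generic and a condition deciding $\dot f$ everywhere.
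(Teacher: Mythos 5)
Your proposal is correct and follows essentially the same fusion argument as the paper: build a descending sequence in $M \cap \pro$ through the dense open sets of $M$ using the extension lemma (Lemma 3.7) while committing coordinates to distinct targets in $R_\delta$, then invoke the lower-bound lemma (Lemma 3.6) to produce the generic condition $q$, which also decides names for $\omega$-sequences of ordinals and hence gives distributivity. The only real difference is bookkeeping: the paper enumerates all of $M \cap \kappa$ upfront and pads each $p_n$ with the root so that every $i \in M \cap \kappa$ enters some $\dom(p_n)$, whereas you dovetail over the coordinates actually appearing in $\bigcup_n \dom(p_n)$; both devices validate the hypothesis of Lemma 3.6.
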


\begin{proof}
	Let $\theta$ be a large enough regular cardinal and consider a countable 
	elementary substructure $M \prec H(\theta)$ with $\pro \in M$. 
	Let $\delta := M \cap \omega_1$. 
	We will show that for all $p \in M \cap \pro$, there exists some $q \le p$ 
	such that for every dense open subset $D$ of $\pro$ in $M$, 
	there exists some $s \in M \cap D$ with $q \le s$. 
	By standard arguments, 
	it follows that $\pro$ is proper and countably distributive.

	Let $\langle D_n : n < \omega \rangle$ 
	enumerate all dense open subsets of 
	$\pro$ in $M$ and let 
	$\langle i_k : k < \omega \rangle$ enumerate $M \cap \kappa$. 
	Consider $p \in M \cap \pro$. 
	By induction we define:
	\begin{enumerate}
		\item a descending sequence of conditions $\langle p_n : n < \omega \rangle$;
		\item a sequence $\langle z_n : n < \omega \rangle$ of elements of 
		$R_\delta$.
	\end{enumerate}
	The following inductive hypotheses will be satisfied for all $n$:
	\begin{enumerate}
		\item[(a)] $p_{n+1} \in D_n$;
		\item[(b)] $i_n \in \dom(p_{n+1})$;
		\item[(c)] for all $m \ge n+1$, 
		$p_m(i_n) <_R z_n$.
	\end{enumerate}

	Let $p_0 := p$. 
	Now consider $n < \omega$ and assume that $p_n$ is defined together with 
	$z_k$ for all $k < n$. 
	Let $p_n'$ be equal to $p_n$ if $i_n \in \dom(p_n)$, and otherwise 
	let $p_n' := p_n \cup \{ (i_n,x_{\text{root}}) \}$ where $x_{\text{root}}$ 
	is the root of $R$. 
	Note that for all $k < n$, $p_n'(i_k) = p_n(i_k) <_R z_k$. 
	Since $R$ is normal, we can 
	fix some $z_n \in R_\delta$ such that $p_n'(i_n) <_R z_n$ and $z_n$ is 
	different from $z_k$ for all $k < n$. 
	Applying Lemma 3.7, fix $p_{n+1} \le p_n'$ in $M \cap D_n$ such that for all 
	$k \le n$, $p_{n+1}(i_k) <_R z_k$.

	This completes the construction. 
	Define $q$ as follows. 
	Let the domain of $q$ be equal to $M \cap \kappa$, and for 
	each $n < \omega$ define $q(i_n) := z_n$. 
	By Lemma 3.6, $q$ is a condition and $q \le p_n$ for all $n < \omega$. 
	Clearly, $q$ is as required.
\end{proof}

\begin{corollary}
	Assuming \textsf{CH}, $\pro$ is proper and $\omega_2$-c.c., and hence 
	preserves all cardinals.
\end{corollary}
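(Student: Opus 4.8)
The plan is to read everything off from the two facts already in hand: Proposition 3.5, that $\pro$ is $(2^\omega)^+$-Knaster, and Theorem 3.8, that $\pro$ is proper. The only new input is the cardinal arithmetic supplied by \textsf{CH}: under \textsf{CH} we have $2^\omega = \omega_1$, and hence $(2^\omega)^+ = \omega_2$.

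First I would specialize Proposition 3.5 to conclude that $\pro$ is $\omega_2$-Knaster. To pass from Knaster to the chain condition, I would note that any $\omega_2$-Knaster poset is $\omega_2$-c.c.: if $A$ were an antichain of size $\omega_2$, the Knaster property would furnish a subset of $A$ of size $\omega_2$ that is pairwise compatible, but such a subset contains two distinct compatible conditions, contradicting that $A$ is an antichain. Properness carries over verbatim from Theorem 3.8, as that statement needs no cardinal-arithmetic hypothesis.

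For the final clause I would invoke the standard preservation theorems. Properness preserves $\omega_1$ (and $\omega$ is preserved trivially), while an $\omega_2$-c.c.\ forcing preserves every cardinal and cofinality that is $\geq \omega_2$. Since there is no cardinal strictly between $\omega_1$ and $\omega_2$, these two facts jointly account for every cardinal, so $\pro$ preserves all cardinals. There is really no obstacle to confront here: all of the substance resides in Proposition 3.5 and Theorem 3.8, and the corollary is the routine combination of their conclusions once \textsf{CH} collapses $(2^\omega)^+$ down to $\omega_2$.
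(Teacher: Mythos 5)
Your proposal is correct and is exactly the argument the paper intends: the corollary is stated without proof because it follows immediately from Proposition 3.5 (which under \textsf{CH} gives $\omega_2$-Knaster, hence $\omega_2$-c.c.) together with Theorem 3.8 (properness), plus the standard preservation facts you cite.
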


If \textsf{CH} does not hold, then $\pro$ will collapse $2^\omega$ to become $\omega_1$, 
provided that $\ka \ge \omega_1$.

\begin{proposition}
	Assuming $\ka \ge \omega_1$, 
	the forcing poset $\pro$ forces $\textsf{CH}$.
\end{proposition}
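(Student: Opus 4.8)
The plan is to combine countable distributivity with a coding argument. By Theorem 3.8 the forcing $\pro$ adds no new reals, so $\mathcal{P}(\omega)^{V[G]} = \mathcal{P}(\omega)^V$; in particular the set of reals is unchanged, and it suffices to produce, in $V[G]$, a surjection $g : \omega_1 \to (2^\omega)^V$. Since $\pro$ is proper, $\omega_1$ is preserved, so such a surjection immediately gives $2^\omega = \omega_1$ in $V[G]$, which is \textsf{CH}.

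To define $g$, I would first fix in $V$ a two-colouring $\beta : R \to 2$ with the property that every node of $R$ has immediate successors of both colours; this is possible because $R$ is normal and so every node has at least two immediate successors. Using that $\ka \ge \omega_1$, fix a countable set of coordinates $\{ i_n : n < \omega \} \subseteq \ka$. Working in $V[G]$, where each $b_{i_n}$ is a cofinal branch of $R$, define for every $\alpha < \omega_1$
\[ g(\alpha) := \langle \beta(b_{i_n}(\alpha)) : n < \omega \rangle \in 2^\omega . \]
Each $g(\alpha)$ is a real of $V[G]$, hence a real of $V$, so $g$ is a function from $\omega_1$ into $(2^\omega)^V$ lying in $V[G]$.

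It remains to show that $g$ is onto, and for this I would argue by density that every ground model real is forced into $\ran(g)$. Fix $r \in (2^\omega)^V$ and an arbitrary condition $p$. Choose a successor level $\alpha = \gamma + 1$ with $\gamma$ above the heights of all values of $p$. For each $n$ pick a node $v_n \ge_R p(i_n)$ of height $\gamma$ (or $v_n$ above the root if $i_n \notin \dom(p)$), and then an immediate successor $q(i_n)$ of $v_n$ with $\beta(q(i_n)) = r(n)$, which exists by the choice of $\beta$. Let $q$ extend $p$ by keeping $q(j) = p(j)$ for the remaining $j \in \dom(p)$ and setting the values above on $\{ i_n : n < \omega \}$. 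Then $q \le p$ and $q \Vdash b_{i_n}(\alpha) = q(i_n)$ for all $n$, whence $q \Vdash g(\alpha) = r$. Thus the set of conditions forcing $r \in \ran(g)$ is dense, so $\Vdash r \in \ran(g)$. Since every real of $V[G]$ already belongs to $V$, this yields $\ran(g) = (2^\omega)^{V[G]}$, and the proof is complete.

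The main obstacle is the density step: realizing an arbitrary prescribed real at a single level by independently steering the branch nodes coordinate by coordinate. This is exactly where normality is used, since the existence of immediate successors of both colours above every node lets one set the bit $r(n)$ freely above each $p(i_n)$; choosing the level $\alpha$ above the support of $p$ guarantees that none of these bits is predetermined, and the genericity of the branches lets a single extension fix all countably many values at once.
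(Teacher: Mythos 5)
Your proof is correct, and while it shares the paper's overall architecture --- countable distributivity (Theorem 3.8) guarantees $(2^\omega)^{V[G]} = (2^\omega)^V$, and a density argument then codes every ground model real into the generic branches at a level above the given condition --- your coding device is genuinely different and arguably cleaner. The paper fixes bijections $g_\alpha : \omega \to R_\alpha$ and, for each $x$, a bijection $h_x$ onto the nodes of $R_{\h_R(x)+\omega}$ above $x$; it defines $\dot f_\alpha(n)$ by asking which node of level $\alpha+\omega$ is hit by the branch $\dot b_i$, where $i$ is the \emph{least} index with $g_\alpha(n) \in \dot b_i$. Its density argument must therefore work with fresh coordinates outside $\dom(p)$ and verify that the intended indices $j_n$ really are the minimal ones whose branches pass through $g_\alpha(n)$; this is the most delicate part of the paper's proof and is exactly where the hypothesis $\kappa \ge \omega_1$ is used (both there and in Lemma 3.4, which underwrites the existence of the least index). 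You instead fix in $V$ a $2$-coloring $\beta$ giving both colors among the immediate successors of every node (available by normality), together with a fixed countable set of coordinates $\{ i_n : n < \omega \}$, and read off the real at level $\alpha$ as $\langle \beta(b_{i_n}(\alpha)) : n < \omega \rangle$. Because you steer the fixed branches $b_{i_n}$ themselves rather than controlling which index is least, the coordinates may already lie in $\dom(p)$, no minimality needs to be forced, and your argument actually goes through under the weaker hypothesis that $\kappa$ is infinite (your one inessential slip is attributing the choice of $\{i_n : n < \omega\}$ to $\kappa \ge \omega_1$ when $\kappa \ge \omega$ suffices). Both routes deliver the same conclusion; yours trades the paper's per-level bijective coding for a one-bit-per-branch coloring, which is all the information the argument requires.
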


\begin{proof}
	For each ordinal $\omega \le \alpha < \omega_1$, fix a bijection 
	$g_\alpha : \omega \to R_\alpha$. 
	For each $x \in R$, let $h_x$ be a bijection from $\omega$ onto 
	the set of elements of $R_{\h_R(x) + \omega}$ which are above $x$.

	For each ordinal $\omega \le \alpha < \omega_1$, define a $(\pro)$-name 
	$\dot f_\alpha$ for a function from $\omega$ to $\omega$ as follows. 
	Consider $n < \omega$, and let $x := g_\alpha(n)$. 
	Let $i$ be a name for the 
	least ordinal in $\ka$ such that $x \in \dot b_i$, which exists by Lemma 3.4.  
	Define $\dot f_\alpha(n) := m$, where $\dot b_i(\alpha+\omega) = h_x(m)$.

	We claim that $\pro$ forces that the sequence 
	$\langle \dot f_\alpha : \omega \le \alpha < \omega_1 \rangle$ includes 
	every function from $\omega$ to $\omega$. 
	So let $f : \omega \to \omega$ and $p \in \pro$ be given. 
	Fix $\omega \le \alpha < \omega_1$ greater than the height of every 
	member of the range of $p$. 
	Let $\langle i_n : n < \omega \rangle$ enumerate the first $\omega$-many 
	elements of $\omega_1 \setminus \dom(p)$ (which is possible since $\ka \ge \omega_1$). 
	Define $q$ so that $q \res \dom(p) := p$ and for all $n$, 
	$q(i_n) := x_{\text{root}}$ where $x_{\text{root}}$ is the root of $R$. 
	Now define $r \le q$ with the same domain as $q$ so that 
	$r(i_n) := g_{\alpha}(n)$ for all $n$, and $r(j)$ is some element of $R_\alpha$ 
	above $q(j)$ for all other $j$ in the domain of $q$.
	
	For each $n$, let $j_n$ be the least member of $\dom(r)$ such that 
	$r(j_n) = g_\alpha(n)$ (which exists because of the choice of $r(i_n)$). 
	It is easy to see that $r$ forces that $j_n$ is the minimal $j \in \ka$ 
	such that $g_\alpha(n) \in \dot b_j$. 
	Now define $s$ with the same domain as $r$ by letting 
	$s(j_n) := h_{g_\alpha(n)}(f(n))$ for all $n$, and $s(j) := r(j)$ 
	for all other $j \in \dom(r)$. 
	Then easily $s$ forces that $\dot f_\alpha = f$.
\end{proof}

\begin{proposition}
	Assuming that $\ka \ge \omega_1$, 
	the forcing poset $\pro$ forces that for every uncountable 
	downwards closed subset 
	$W \subseteq R$, there exists some $i < \ka$ such that 
	$\dot b_i \subseteq W$.
\end{proposition}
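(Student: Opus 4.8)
The plan is to argue by contradiction: supposing some condition forces every branch $\dot b_i$ to leave $\dot W$, I will build a master condition that keeps one branch inside $\dot W$ up to its associated ordinal $\delta$, and then use the defining property of master conditions to pin the escape point of that branch strictly below $\delta$, a contradiction. Since the assertion concerns an arbitrary uncountable downward closed $\dot W$, it suffices to show that no $p \in \pro$ forces both ``$\dot W$ is uncountable and downward closed'' and ``$\forall i < \ka,\ \dot b_i \not\subseteq \dot W$''.

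First I would pass to the everywhere-unbounded core of $\dot W$. In the extension, let $W' = \{ x \in W : W \cap R_x \text{ is unbounded in } \omega_1 \}$, with name $\dot W'$. A routine tree computation shows that $p$ forces $W'$ to be uncountable, downward closed, contained in $W$, and \emph{everywhere unbounded}, meaning that above each node of $W'$ the set $W'$ meets every higher level. (If $W' \subseteq R \res \alpha$, then each of the countably many nodes of $W \cap R_\alpha$ would have bounded cone in $W$, say inside $R \res \beta_x$; taking $\beta = \sup_x \beta_x$, every node of $W$ of height $\ge \alpha$ lies above its height-$\alpha$ predecessor and hence inside $R \res \beta$, so $W$ would be countable.) I will use $W'$ only to drive the construction, while phrasing the contradiction in terms of $W$.

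Now suppose for contradiction that $p$ forces ``$\forall i,\ \dot b_i \not\subseteq \dot W$'', so $p$ forces, for each $i$, that $\dot\gamma_i := \sup\{ \alpha+1 : \dot b_i(\alpha) \in \dot W \}$ is countable. Fix a large enough regular $\theta$ and a countable $M \prec H(\theta)$ with $p, \dot W, \dot W', R, \pro \in M$, and put $\delta := M \cap \omega_1$. The heart of the proof is to produce a master condition $q \le p$ for $M$ together with a coordinate $i^* \in M \cap \ka$ such that $q \Vdash \dot b_{i^*} \res \delta \subseteq \dot W$; equivalently, $q(i^*)$ is a node $z^* \in R_\delta$ all of whose proper predecessors are forced into $\dot W$. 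To build $q$ I would follow the proof of Theorem 3.8, constructing a descending sequence $\langle p_n : n<\omega \rangle$ in $M$ meeting every dense open $D_n \in M$ and choosing level-$\delta$ targets as there, but I would single out one coordinate $i^*$ and maintain the invariant that $p_n \Vdash p_n(i^*) \in \dot W'$ with $\h_R(p_n(i^*))$ increasing to $\delta$. Then $z^* := \bigcup_n p_n(i^*) \in R_\delta$, and since each $p_n(i^*)$, and hence by downward closure of $W'$ each proper initial segment of $z^*$, is forced into $\dot W' \subseteq \dot W$, Lemma 3.6 guarantees that $q$, with $q(i^*) = z^*$ and $\dom(q) = M \cap \ka$ as in Theorem 3.8, is a master condition forcing $\dot b_{i^*} \res \delta \subseteq \dot W$.

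The delicate point, and the one I expect to be the main obstacle, is the inductive step: passing from $p_n$ to $p_{n+1} \in D_n$ while keeping $i^*$ inside $\dot W'$ and raising its height. Since $\dot W'$ is everywhere unbounded, $p_n$ forces $\dot W' \cap R_{p_n(i^*)}$ to reach arbitrarily high levels, so I can first aim $i^*$ at a node of $\dot W'$ above $p_n(i^*)$; the difficulty is entering $D_n$ without driving $i^*$ off $W'$. I would handle this exactly as in Lemma 3.7, applying the reflection Lemma 2.1 inside the relevant Suslin derived tree, but with the reflecting set $X$ strengthened to consist of tuples realizable by a condition in $D_n$ whose $i^*$-value is moreover forced into $\dot W'$. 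The witness required to invoke Lemma 2.1 is supplied by the everywhere-unboundedness of $\dot W'$ together with the Suslin tail property (a dense open subset of a Suslin tree contains every node above some level), which together let me place the $i^*$-coordinate on a high $\dot W'$-node already meeting the $D_n$-requirement; Lemma 2.1 then reflects a suitable extension back into $M$. Granting this step, the construction yields $q$ as above, and the contradiction is immediate: on one hand $q \Vdash \dot b_{i^*} \res \delta \subseteq \dot W$, so $q \Vdash \dot\gamma_{i^*} \ge \delta$; on the other hand $\dot\gamma_{i^*} \in M$ names a countable ordinal and $q$ is a master condition for $M$, so $q \Vdash \dot\gamma_{i^*} < \delta$. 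This contradiction shows that no such $p$ exists, which is precisely the statement of the proposition.
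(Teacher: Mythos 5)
Your closing contradiction mechanism is sound and is essentially the paper's as well: a master condition $q$ together with a name in $M$ for a countable ordinal (your $\dot\gamma_{i^*}$, the paper's $\dot\delta_i$) that $q$ simultaneously forces to be $\ge \delta$ and $< \delta$. The genuine gap is in the construction that is supposed to produce $q$ and the coordinate $i^*$, exactly at the step you flag as the main obstacle, and it is not repaired by Lemma 2.1. There are two intertwined failures. First, the limit step: you assert $z^* := \bigcup_n p_n(i^*) \in R_\delta$, but the chain $\langle p_n(i^*) : n < \omega \rangle$ is built externally to $M$, following $\dot W'$ wherever it goes, and its union is merely a cofinal branch of $R \res \delta$. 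Since $R$ is normal, $R \res \delta$ has continuum many cofinal branches while $R_\delta$ is countable, so almost all such branches have \emph{no} upper bound at level $\delta$; nothing in your construction ties the chain to a node of $R_\delta$. This is precisely why Lemma 3.6 demands, for \emph{every} coordinate $i$, a target $z_{k_i} \in R_\delta$ fixed in advance with $p_m(i) <_R z_{k_i}$; your coordinate $i^*$ has no such target, so $q$ need not be a condition at all. Second, if you try to repair this by fixing a target $z^* \in R_\delta$ for $i^*$ in advance and maintaining $p_n(i^*) <_R z^*$, the inductive step collapses: everywhere-unboundedness of $\dot W'$ gives $\dot W'$-nodes at all levels above $p_n(i^*)$, but not along the particular chain of predecessors of $z^*$. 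Concretely, the witness needed to run the Lemma 3.7 argument --- a condition $t \in D_n$ below the ``jumped'' condition whose $i^*$-entry is $z^*$, with $t \Vdash t(i^*) \in \dot W'$ --- need not exist, because no condition need force $z^*$, or anything above it, into $\dot W'$. Which node of $R_\delta$ has all of its predecessors in $\dot W$ is decided only generically; it cannot be pinned down by any single coordinate chosen during the recursion.

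The paper's proof resolves exactly this difficulty by refusing to designate a branch in advance: it hedges over all of $R_\delta$. Using two enumerations, $R_\delta = \{ y_n : n < \omega \}$ and $M \cap \ka = \{ j_n : n < \omega \}$, the bookkeeping arranges that the master condition $q$ satisfies $\{ q(i) : i \in M \cap \ka \} = R_\delta$, and that for each coordinate $i_k$ the escape ordinal $\dot \delta_{i_k}$ is decided inside $M$ (via Lemma 3.7) to be some $\delta_k < \delta$. The name $\dot W$ plays no role whatsoever in building $q$; it enters only afterwards. Since $\dot W$ is forced to be uncountable and downwards closed, it is forced to meet $R_\delta$, so some extension $r \le q$ decides $z_n \in \dot W$ for some $n$; then $r$ forces $z_n \in \dot b_{i_n} \cap \dot W \cap R_\delta$ while also forcing that no element of $\dot W$ of height $\ge \delta_n$ lies in $\dot b_{i_n}$ --- a contradiction, since $\delta > \delta_n$. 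The essential move you are missing is that the witnessing coordinate is chosen \emph{last}, by a generic extension of the already-built master condition, rather than tracked throughout the construction. (Your preliminary reduction to the everywhere-unbounded core $\dot W'$ is correct but unnecessary: uncountability plus downward closure already forces $\dot W \cap R_\delta \ne \emptyset$, which is all the paper's argument needs.)
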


\begin{proof}
	Suppose for a contradiction that $p$ is a condition which 
	forces that $\dot W$ is an uncountable  
	downwards closed subset of $R$ which does not contain the cofinal branch 
	$\dot b_i$ for all $i < \ka$. 
	Since $\dot W$ is forced to be downwards closed, it easily follows that 
	$p$ forces that for all $i < \ka$, 
	there exists an ordinal $\dot \delta_i < \omega_1$ 
	such that every member of $\dot W$ with height at least 
	$\dot \delta_i$ is not in $\dot b_i$.

	Fix a large enough regular cardinal $\theta$, and let 
	$M$ be a countable elementary substructure of $H(\theta)$ which contains 
	as members the objects 
	$\pro$, $p$, $\dot W$, and $\langle \dot \delta_i : i < \ka \rangle$. 
	Let $\delta := M \cap \omega_1$. 
	Enumerate $R_\delta$ as $\langle y_n : n < \omega \rangle$ and enumerate 
	$M \cap \ka$ as $\langle j_n : n < \omega \rangle$.

	We will define by induction sequences $\langle p_n : n < \omega \rangle$, 
	$\langle m_n : n < \omega \rangle$, 
	$\langle i_n : n < \omega \rangle$, 
	$\langle z_n : n < \omega \rangle$, and 
	$\langle \delta_n : n < \omega \rangle$ as follows. 
	Let $p_0 := p$ and $m_0 := 0$.

	Let $n$ be given and assume that $p_n$ and $m_n$ are defined, as well as 
	$i_k$, $z_k$, and $\delta_k$ for all $k < m_n$. 
	We assume as an inductive hypothesis that 
	$p_n(i_k) <_R z_k$ for all $k < m_n$.

	The next step of the construction will consist of two stages. 
	In the first stage, we consider $y_n$. 
	
	\bigskip

	\noindent Case 1: $y_n \notin \{ z_k : k < m_n \}$. 
	Choose some ordinal $i_{m_n} \in M \cap \ka$ which is not in $\dom(p_n)$ 
	(which is possible since $\ka \ge \omega_1$) and 
	define $p_n' := p_n \cup \{ (i_{m_n},x_{\text{root}}) \}$, 
	where $x_{\text{root}}$ is the root of $R$. 
	Define $z_{m_n} := y_n$. 
	Define $m_n' := m_n+1$.
	
	\bigskip
	
	\noindent Case 2: $y_n = z_k$ for some $k < m_n$. 
	Define $p_n' := p_n$ and $m_n' := m_n$.
	
	\bigskip

	Note that in either case, for all $k < m_n'$, 
	$i_k \in \dom(p_n')$ and $p_n'(i_k) <_R z_k$. 

	In the second stage, we consider $j_n$. 
	
	\bigskip
	
	\noindent Case a: $j_n \in \{ i_k : k < m_n' \}$. 
	Define $m_{n+1} := m_n'$ and $p_n^* := p_n'$.

	\bigskip
	
	\noindent Case b: $j_n \in \dom(p_n') \setminus \{ i_k : k < m_n' \}$. 
	Define $i_{m_n'} := j_n$. 
	Let $z_{m_n'}$ be some element of $R_{\delta}$ such that 
	$p_n'(j_n) <_R z_{m_n'}$ and which is not equal to $z_k$ for all $k < m_n'$ 
	(which is possible since $R$ is normal). 
	Define $m_{n+1} := m_n'+1$ and $p_n^* := p_n'$.
	
	\bigskip
	 
	\noindent Case c: $j_n \notin \dom(p_n')$. 
	Define $i_{m_n'} := j_n$. 
	Let $z_{m_n'}$ be some element of $R_\delta$ which is not equal to 
	$z_k$ for all $k < m_n'$. 
	Define $p_n^* := p_n' \cup \{ (j_n,x_{\text{root}}) \}$, 
	where $x_{\text{root}}$ is the root of $R$. 
	Define $m_{n+1} := m_n' + 1$.

	\bigskip

	Note that for all $k < m_{n+1}$, $i_k \in \dom(p_n^*)$ 
	and $p_n^*(i_k) <_R z_k$. 

	Now apply Lemma 3.7 to fix $p_{n+1} \le p_n^*$ in $M$ which decides 
	$\dot \delta_{i_k}$ as some ordinal $\delta_k \in M$, for each 
	$m_n \le k < m_{n+1}$, 
	and satisfying that for all $k < m_{n+1}$, 
	$p_{n+1}(i_k) <_R z_k$.

	This completes the definition of the sequences. 
	It is easy to check by cases that every $j_n$ is equal to $i_k$ for some $k$, 
	that is, $\{ i_k : k < \omega \} = \{ j_n : n < \omega \} = M \cap \ka$, 
	and whenever $i_k \in \dom(p_m)$, then $p_m(i_k) <_R z_k$. 
	Moreover, every $y_n$ equals $z_k$ for some $k$, that is, 
	$\{ z_k : k < \omega \} = \{ y_n : n < \omega \} = R_\delta$.  
	Define $q$ with domain equal to $M \cap \ka$ such that for all $n$, 
	$q(i_n) := z_n$. 
	By Lemma 3.6, $q$ is a condition and $q \le p_n$ for all $n$.
	
	Now for all $n$, $q$ forces that $z_n \in \dot b_{i_n}$ and 
	$\dot \delta_{i_n} = \delta_n < \delta$. 
	Extend $q$ to $r$ which decides for some $n$ that $z_n \in \dot W$, which 
	is possible since $\dot W$ is forced to be uncountable and downwards closed. 
	Now we have a contradiction since 
	$r$ forces that every member of $\dot W$ with height at least 
	$\dot \delta_{i_n} = \delta_n$ is not in $\dot b_{i_n}$, but 
	on the other hand $r$ forces that $z_n \in \dot b_{i_n} \cap \dot W \cap R_\delta$.
\end{proof}

\begin{corollary}
	Assuming that $\ka \ge \omega_1$, 
	the forcing poset $\pro$ forces that $R$ contains no Aronszajn subtree.
\end{corollary}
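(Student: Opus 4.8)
The plan is to argue by contradiction and reduce everything to Proposition 3.11 via the downward-closure reduction of Lemma 2.2. Before doing so, I would first record that after forcing, $R$ is still an $\omega_1$-tree. The conditions of $\pro$ are functions into $R$ and the generic branches $b_i$ are subsets of $R$, so the forcing adds no new nodes to $R$, and each level $R_\alpha$ is the same countable set as in the ground model. Since $\pro$ is countably distributive (Theorem 3.8), $\omega_1$ is preserved, so $R$ retains height $\omega_1$ with countable levels. This is what makes the phrases ``Aronszajn subtree'' and Lemma 2.2 applicable in $V[G]$.

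Next I would work in $V[G]$ and suppose toward a contradiction that $R$ has an Aronszajn subtree. By Lemma 2.2, $R$ then has a \emph{downwards closed} Aronszajn subtree $W$. Since $W$ has height $\omega_1$, it is in particular an uncountable downwards closed subset of $R$ possessing no cofinal branch. This is exactly the hypothesis needed to invoke the previous proposition.

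Now I would apply Proposition 3.11 to $W$: because $\ka \ge \omega_1$ and $W$ is uncountable and downwards closed, there is some $i < \ka$ with $b_i \subseteq W$. But $b_i$ is a cofinal branch of $R$ (as established just after Lemma 3.4), hence a linearly ordered, downward-closed subset of $R$ meeting every level $\alpha < \omega_1$. Being contained in $W$, it meets every level of $W$, and downward closure of $W$ ensures it is a branch of $W$; thus $b_i$ is a cofinal branch of $W$, contradicting that $W$ is Aronszajn. This completes the argument.

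As for the main obstacle: the substantive work has already been done in Proposition 3.11, so the corollary itself is genuinely short, and I would not expect a serious difficulty. The only points requiring care are (i) confirming that $R$ remains an $\omega_1$-tree in the extension, so that the notion of an Aronszajn subtree makes sense and Lemma 2.2 is usable, and (ii) verifying that a cofinal branch of $R$ lying inside the downwards closed set $W$ is automatically a cofinal branch of $W$ — which is immediate from downward closure together with the fact that $b_i$ meets every level below $\omega_1$.
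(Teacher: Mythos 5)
Your proof is correct and follows exactly the paper's route: the paper's proof is simply ``Immediate from Lemma 2.2 and Proposition 3.11,'' which is precisely the reduction you carry out (downward-closure via Lemma 2.2, then Proposition 3.11 produces a branch $b_i \subseteq W$, contradicting that $W$ is Aronszajn). Your additional checks---that $R$ remains an $\omega_1$-tree in the extension and that $b_i$ is genuinely a cofinal branch of $W$---are the details the paper leaves implicit, and you verify them correctly.
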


\begin{proof}
	Immediate from Lemma 2.2 and Proposition 3.11.
\end{proof}

\begin{corollary}
	Assuming that $\ka \ge \omega_1$, 
	the forcing poset $\pro$ forces that every cofinal branch of $R$ is 
	equal to $\dot b_i$ for some $i < \ka$.
\end{corollary}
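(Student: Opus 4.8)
The plan is to derive this almost immediately from Proposition 3.11. Working in $V[G]$, I would let $b$ be an arbitrary cofinal branch of $R$ and argue that it must coincide with one of the $b_i$. The key observation is that $b$, viewed as a subset of $R$, is an uncountable downwards closed set: it is downwards closed because $b$ is a branch (if $y \in b$ and $x <_R y$, then $x \in b$), and it is uncountable because $b$ is cofinal, meeting every level $R_\alpha$ for $\alpha < \omega_1$.

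Applying Proposition 3.11 to $W := b$, I would obtain some $i < \ka$ with $b_i \subseteq b$. It then remains to upgrade this inclusion to an equality. Here I would use that both $b_i$ and $b$ are cofinal branches, so each contains exactly one element of height $\alpha$ for every $\alpha < \omega_1$, namely $b_i(\alpha)$ and $b(\alpha)$ respectively. For each such $\alpha$, since $b_i(\alpha) \in b_i \subseteq b$ and $b$ has a unique element of height $\alpha$, we get $b_i(\alpha) = b(\alpha)$; hence $b_i = b$, as desired.

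Since the substantive work has already been carried out in Proposition 3.11, I do not expect any genuine obstacle in this corollary. The only points requiring (routine) care are confirming that a cofinal branch satisfies the hypotheses of Proposition 3.11 -- that is, that it is uncountable and downwards closed -- and noting that an inclusion between two chains, each having a single node at each level below $\omega_1$, is automatically an equality.
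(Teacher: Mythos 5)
Your proposal is correct and follows exactly the paper's route: the paper also deduces the corollary immediately from Proposition 3.11 together with the observation that a cofinal branch is an uncountable downwards closed subset of $R$. Your additional step upgrading the inclusion $b_i \subseteq b$ to equality (two cofinal branches with one node per level) is precisely the routine detail the paper leaves implicit.
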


\begin{proof}
	Immediate from Proposition 3.11 and the fact that 
	any cofinal branch of $R$ is a downwards closed subset of $R$.
\end{proof}

Recall that a tree $T$ is \emph{rigid} if it has no automorphism other than 
the identity function, and is \emph{totally rigid} if for all distinct 
$x$ and $y$ of $T$, $T_x$ and $T_y$ are not isomorphic. 
We will show that after forcing with $\pro$ 
the tree $R$ is totally rigid, and in fact, has an even 
stronger rigidity property.

A map $f : S \to T$ between trees is \emph{strictly increasing} 
if $x <_S y$ implies $f(x) <_T f(y)$. 
In general, this property is strictly weaker than the property when ``implies'' 
is replaced with ``iff''. 
Suppose that $T$ has a cofinal branch $b$. 
Then there always exists a (trivial) strictly increasing 
map $f : S \to T$, namely, the map $f(x) := b(\h_T(x))$ for all $x \in S$. 
Let us say that a strictly increasing map $f : S \to T$ 
\emph{maps $S$ into a branch} if $f(x)$ and $f(y)$ are comparable for all $x, y \in S$. 
In this case, $f[S]$ is an uncountable chain of $T$.

\begin{definition}
	An $\omega_1$-tree $T$ is \emph{essentially rigid} if 
	for all incomparable elements 
	$x$ and $y$ of $T$ and any dense subset $U$ of $T_x$, 
	if $f : U \to T_y$ is strictly increasing, 
	then there are densely many $z \in U$ such that 
	$f$ maps $T_z \cap U$ into a branch of $T_y$.
\end{definition}

\begin{thm}
	The forcing poset $\pro$ forces that $R$ is essentially rigid.
\end{thm}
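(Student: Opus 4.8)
The plan is to argue by contradiction, pushing the failure of essential rigidity---which a priori lives in the extension, where $R$ is Kurepa rather than Suslin---back into the ground model, where the freeness of $R$ is available. Suppose some condition $p$ forces that $R$ is not essentially rigid. Since $R$ and all of its nodes lie in $V$ and $\pro$ is countably distributive, I would first extend $p$ to decide the witnessing data: incomparable nodes $x$ and $y$, names $\dot U$ and $\dot f$ for a dense subset of $R_x$ and a strictly increasing map $\dot f : \dot U \to R_y$, and a node $z^* \in R_x$ above which no $z$ is good, i.e.\ $p$ forces that for every $z \in \dot U$ with $z^* \le_R z$ there are $a, b \in R_z \cap \dot U$ with $\dot f(a)$ and $\dot f(b)$ incomparable. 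Note that such $a, b$ are automatically incomparable, since $\dot f$ is strictly increasing. Using normality of $R$ I would also arrange that $x$ and $y$ have the same height (replacing $x$ by an upward extension and restricting $\dot U$ and $\dot f$ accordingly), so that $R_x \otimes R_y$ is a genuine $2$-derived tree and hence Suslin by freeness.

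Next I would set up the reflection apparatus exactly as in the proof of Theorem 3.8. Fix a large regular $\theta$ and a countable $M \prec H(\theta)$ containing $\pro, p, x, y, z^*, \dot U, \dot f$, and put $\delta := M \cap \omega_1$. The idea is to run the fusion of Theorem 3.8 to build a master condition $q \le p$ which is $(M,\pro)$-generic, meeting every dense set in $M$ and, via Lemma 3.6, pinning the generic branches $\dot b_i$ for $i \in M \cap \ka$ through prescribed nodes of $R_\delta$. The point of the master condition is that below $q$ the behaviour of $\dot f$ on the part of $\dot U$ of height less than $\delta$ is decided inside $M$, while the generic branches are forced to pass through specified elements of $R_\delta$.

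The heart of the argument is a reflection in the spirit of Lemma 3.7. Assuming that $q$ forces that no $z \ge_R z^*$ is good, I would use the forced everywhere-splitting to build, inside $M$ and by a fusion of length $\omega$ meeting the dense sets of $M$, a descending sequence of conditions together with two image-threads whose values are steered toward prescribed distinct nodes $u, v \in R_\delta$ lying above incomparable nodes of $R_y$. The mechanism for steering the forced $\dot f$-images into $R_\delta$ is Lemma 2.1 applied to the Suslin derived tree $R_x \otimes R_y$: the set of realizable configurations $\bigl( a, b, \dot f(a), \dot f(b) \bigr)$ belongs to $M$, a sufficiently high such configuration exists below $q$, and Lemma 2.1 reflects it into $M$ below the chosen targets, just as in Lemma 3.7. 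Proposition 3.11 is used to pass the countable limit stages of this construction, since it guarantees that the relevant countable image-chains are cofinal in generic branches and are therefore realized at level $\delta$. The outcome is a single master condition below $q$ forcing that the incomparable nodes $u, v \in R_\delta$ are both of the form $\dot f(\bar a), \dot f(\bar b)$ with $\bar a \le_R \bar b$ in $\dot U$; since $\dot f$ is forced to be strictly increasing, this forces $\dot f(\bar a)$ and $\dot f(\bar b)$ to be comparable, contradicting $u \perp v$.

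The step I expect to be the main obstacle is precisely this transfer. The everywhere-splitting hypothesis is a statement about $V[G]$, where $R$ has many cofinal branches and uncountable antichains, so it cannot be contradicted by exhibiting an antichain directly; the contradiction must be manufactured in the ground model, where $R_x \otimes R_y$ is Suslin. Making the forced incomparabilities of the $\dot f$-images genuinely incompatible with the branch-values that the master condition realizes at level $\delta$---that is, forcing the two threads onto a common generic branch up to $\delta$ while their images remain incomparable---is the delicate point, and it is exactly here that freeness (through the Suslinity of the derived tree and the applicability of Lemma 2.1) and Proposition 3.11 (through the realization of limit chains) must be combined with care.
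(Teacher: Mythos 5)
Your proposal has a genuine gap at exactly the step you yourself flag as the ``main obstacle,'' and the tools you propose cannot close it. The fusion arguments of Lemma 3.7 and Theorem 3.8 work because the objects being steered toward prescribed nodes of $R_\delta$ are the coordinates $p(i)$ of conditions, which can be set to any node of $R$ whatsoever; this is why, in Lemma 3.7, one can exhibit a witness (the condition $s$ with $s(i_k) := z_k$) showing that the set $X$ of realizable tuples has an element \emph{above} the prescribed level-$\delta$ node, which is the hypothesis required to apply Lemma 2.1. In your construction the objects to be steered are elements of $\dot U$ and values of $\dot f$, and these are not freely adjustable: for prescribed targets $u, v \in R_\delta$, or even for a prescribed node $a_\delta \in R_\delta$ meant to cap a thread through $\dot U$, there is in general no condition forcing some element of $\dot U$ above the current position to lie below $a_\delta$, let alone to have $\dot f$-image below $u$. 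Density of $\dot U$ in $R_x$ gives extensions \emph{somewhere}, not along a prescribed chain, so the existence hypothesis of Lemma 2.1 fails for your set of configurations $(a, b, \dot f(a), \dot f(b))$. Your appeal to Proposition 3.11 is also off target: it is a statement about the extension (every uncountable downward closed set contains some $\dot b_i$), not a device for realizing countable chains at level $\delta$ in a ground-model construction, and without steering, the threads you build need not have any upper bound in $R_\delta$ at all. Finally, the intended outcome --- a condition forcing incomparable $u, v$ to equal $\dot f(\bar a), \dot f(\bar b)$ with $\bar a \le_R \bar b$ --- could only emerge if the splitting hypothesis ever produced \emph{comparable} preimages with incomparable images, whereas it produces exactly the opposite (as you note, the witnesses $a, b$ are automatically incomparable); no mechanism for this conversion is described, and none exists.

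The paper's proof is not a fusion argument at all, and its key idea is absent from your proposal: the product absorption $(\pro) \times R \cong \prod_{\omega,\kappa+1} R$. Working in $V[G]$ with the splitting witness $x^*$, one forces with $R$ itself to add a $V[G]$-generic branch $H$ containing $x^*$; in $V[G][H]$ this $H$ is the branch $\dot b_\ka$ of the extended product. Genericity of $H$ gives that $H \cap U$ is uncountable, so the downward closure $b$ of $f[H \cap U]$ is a cofinal branch of $R_y$; by Corollary 3.13 applied to $\prod_{\omega,\kappa+1} R$, $b = b_i$ for some $i \le \ka$, and since $y \in b$ and $x \perp y$ we get $b \ne b_\ka$, so $i < \ka$ and hence $b_i \in V[G]$. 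Then $D := \{ c \in R_x \cap U : f(c) \notin b_i \}$ lies in $V[G]$, is dense below $x^*$ by the splitting hypothesis (of two incomparable images at most one can lie in the chain $b_i$), and so is met by the generic $H$, contradicting $f[H \cap U] \subseteq b_i$. In short, the ``extra thread through $\dot U$'' that your fusion tries to build by hand is obtained for free as the generic branch of one additional factor of the product, and the pinning of the image branch that you attempt with Lemma 2.1 is accomplished instead by the branch classification (Corollary 3.13) in the larger extension. If you want to repair your write-up, this reformulation --- not a finer fusion --- is what is needed.
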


\begin{proof}
	Let $x$ and $y$ be incomparable elements of $R$. 
	Fix a generic filter $G$ on $\pro$. 
	In $V[G]$, let $U$ be a dense subset of $R_x$ and 
	let $f : U \to R_y$ be strictly increasing. 
	Suppose for a contradiction that there exists some $x^* \in U$ such that 
	for all $z >_R x^*$ in $U$, $f$ does not map $R_{z} \cap U$ 
	into a branch, which means that 
	there exist distinct elements $a$ and $b$ 
	of $R_z \cap U$ such that $f(a)$ and $f(b)$ are incomparable.

	Consider the forcing poset 
	$\prod_{\omega,\kappa+1} R$, which is isomorphic to the two-step product 
	$(\pro) \times R$. 
	Fix a $V[G]$-generic filter $H$ on $R$ with $x^* \in H$. 
	Then $H$ is a cofinal branch of $R_x$, and in $V[G][H]$, $H = b_\ka$. 
	Since $U \in V[G]$ is dense in $R_x$ and $H$ is $V[G]$-generic, 
	$U \cap H$ is uncountable. 
	Let $b$ be the downward closure of $f[H \cap U]$, which is a cofinal branch of $R_y$. 
	By Corollary 3.13 applied to the forcing 
	$\prod_{\omega,\kappa+1} R$, $b$ is equal to $b_i$ for some $i \le \ka$. 
	But $b$ contains $y$, and hence cannot contain $x$ since $x$ and 
	$y$ are incomparable. 
	So $b \ne b_\ka$. 
	Therefore, $b$ is equal to $b_i$ for some $i < \ka$.

	In the model $V[G]$, 
	define $D$ as the set of all $c \in R_{x} \cap U$ such that 
	$f(c) \notin b_i$. 
	We claim that $D$ is dense below $x^*$. 
	Consider $z >_R x^*$. 
	Since $U$ is dense in $R_x$, we may assume without loss of generality 
	that $z \in U$. 
	By the choice of $x^*$, 
	there exist distinct elements $a$ and $b$ 
	of $R_z \cap U$ such that $f(a)$ and $f(b)$ are incomparable. 
	Then $f(a)$ and $f(b)$ cannot both be in the branch $b_i$. 
	Let $c \in \{ a, b \}$ be such that $f(c) \notin b_i$. 
	This completes the argument that $D$ is dense below $x^*$. 
	Since $x^* \in H$ and $H$ is $V[G]$-generic, there exists some $c \in H \cap D$. 
	Then $f(c) \notin b_i$ and $f(c) \in f[H \cap U] \subseteq b_i$, 
	which is a contradiction.
\end{proof}

In \cite{krueger} we proved that if $R$ is a free Suslin tree, then for any 
$n < \omega$, there exists a c.c.c.\ forcing which forces that $R$ 
is $n$-free but for all $m > n$, every $m$-derived tree is special. 
Combined with the results of this article, we see that free Suslin trees can 
acquire a wide variety of properties by forcing.

As mentioned in the introduction, there are limitations to turning a free 
Suslin tree into a Kurepa tree via c.c.c.\ forcing. So the best 
we can ask for is a consistency result.

\begin{question}
Is it consistent that there exists a free Suslin tree and 
for any normal free Suslin tree $R$, there exists a c.c.c.\ forcing poset 
which forces that $R$ is a Kurepa tree?
\end{question}

\section*{Funding}

This material is based upon work supported by the Simons Foundation under Grant 631279.

\nocite{jech}

\nocite{devlin}

\end{document}